\theoremstyle{plain}
\newtheorem{thm}{Theorem}[section]
\newtheorem{lem}[thm]{Lemma}
\newtheorem{defn}[thm]{Definition}
\newtheorem{ass}{Assumption}
\newtheorem{prob}{Problem}
\newcommand{\FG}[1][]{ 
    \Gamma\ifthenelse{\isempty{#1}}{}{^{#1}}%
}
\newcommand{\fd}[2]{\mathrm{delay\_plan}(#1,\;#2)}
\newcommand{\ftc}[3]{\mathrm{try\_connect}(#1,\;#2)}
\newcommand{\vertex}{\kappa}
\newcommand{\vertices}{K}
\newcommand{\tor}[1]{t^{\mathrm{o}}_{#1}} 
\newcommand{\tde}[1]{t^{\mathrm{d}}_{#1}} 
\newcommand{\tst}[1]{t^{\mathrm{s}}_{#1}} 
\newcommand{\dep}[1]{\delta_{#1}} 
\newcommand{\dmax}[1]{\delta^{\mathrm{max}}_{#1}} 
\newcommand{\ftt}[2]{f_{\mathrm{tt}}(#1,\;#2)}
\newcommand{\fc}[2]{f_{\mathrm{tc}}(#1,\;#2)}
\newcommand{\PV}[1]{
    \Phi\ifthenelse{\isempty{#1}}{}{_{#1}}
}
\title{Optimal Chaining of Vehicle Plans with Time Windows}
\author{David Fiedler$^1$\thanks{Corresponding author.} \and Fabio V. Difonzo$^2$ \and Jan Mrkos$^1$}
\date{
    \small$^1$Department of Computer Science, Faculty of Electrical Engineering, CTU in Prague, Karlovo náměstí 13, 121 35 Praha 2, Czech Republic \\ \texttt{\{david.fiedler, jan.mrkos\}@agents.fel.cvut.cz}\\%
	$^2$Dipartimento di Matematica, Universit\`a degli Studi di Bari Aldo Moro, Via E. Orabona 4, 70125 Bari, Italy \\ \texttt{fabio.difonzo@uniba.it}\\[2ex]%
    \today
}
\begin{document}

\maketitle

\begin{abstract}
For solving problems from the domain of Mobility-on-Demand (MoD), we often need to connect vehicle plans into plans spanning longer time, a process we call \emph{plan chaining}. 
As we show in this work, chaining of the plans can be used to reduce the size of MoD providers' fleet (fleet-sizing problem) but also to reduce the total driven distance by providing high-quality vehicle dispatching solutions in MoD systems.
Recently, a solution that uses this principle has been proposed to solve the fleet-sizing problem~\cite{vazifehAddressingMinimumFleet2018}.
The method does not consider the time flexibility of the plans. Instead, plans are fixed in time and cannot be delayed. However, time flexibility is an essential property of all vehicle problems with time windows.
This work presents a new plan chaining formulation that considers delays as allowed by the time windows and a solution method for solving it.
Moreover, we prove that the proposed plan chaining method is optimal, and we analyze its complexity.
Finally, we list some practical applications and perform a demonstration for one of them: a new heuristic vehicle dispatching method for solving the static dial-a-ride problem.
The demonstration results show that our proposed method provides a better solution than the two heuristic baselines for the majority of instances that cannot be solved optimally. At the same time, our method does not have the largest computational time requirements compared to the baselines.
Therefore, we conclude that the proposed optimal chaining method provides not only theoretically sound results but is also practically applicable.
\end{abstract}




\section{Introduction}
The fastest-growing mode in urban mobility is on-demand mobility, mostly realized by transportation network companies like Uber or Lyft.
Mobility-on-Demand (MoD) has an advantage over private vehicles in reducing the fleet size (and consequently, the parking space) due to \emph{carsharing}: one car can serve many travel requests during one day.
Moreover, some MoD options (e.g., Uber Pool) allow users to share rides (\emph{ridesharing}), and, as a result, they reduce the total distance driven over traveling separately.

One of the key problems regarding MoD systems is to determine the minimal vehicle fleet able to serve all travel requests: the \emph{fleet sizing} problem.
By reducing the fleet size, we can reduce the capital cost of the system by reducing the number of vehicles and the parking space needed.
Moreover, we can reduce the operational cost by reducing the number of drivers needed.
Another important problem is the \emph{vehicle dispatching}: a problem of assigning vehicles to requests and determining the vehicle plans (routes).
This problem is very complex especially when ridesharing is employed, as the number of possible plans grows exponentially with the number of requests.
In operational research field, this problem is known as the \emph{dial-a-ride problem} (DARP).
By providing high-quality vehicle dispatching solutions, we can reduce the operational cost of the system by reducing the total distance driven by the vehicles.
Moreover, by sharing rides, we can even reduce the required fleet size with all the benefits mentioned above.

In the MoD context, we often need to connect vehicle plans to longer plans.
For example, we can connect one vehicle plan starting at 7:00 and ending at 8:00 to another plan starting at 8:30 and ending at 9:00, resulting in a plan starting at 7:00 and ending at 9:00.
This process is the central theme of this work, we call it \emph{plan chaining}.

Below, in Section~\ref{sec:fleet_sizing_intro}, we will briefly introduce the research on the fleet-sizing problem, highlighting the use of connecting the plans to solve it.
Then, in Section~\ref{sec:ridesharing_intro}, we present the ridesharing research and suggest the possibility of using plan chaining to solve the DARP.
Finally, in Section~\ref{sec:contribution}, we will summarize the contribution of this work.

\subsection{Fleet sizing}
\label{sec:fleet_sizing_intro}
In the well-known 2018 article,~\textcite{vazifehAddressingMinimumFleet2018} solves the fleet sizing problem optimally using two steps. 
First, they generate a shareability network: a graph where the nodes are the plans for solving each request, and edges are the possible connections between those plans.
The second step is to minimize the number of edges in the shareability network.
The authors prove that the shareability network is an acyclic graph, and thus, the problem can be solved in polynomial time by the Hoptcroft-Carp algorithm applied to a bipartite graph corresponding to the shareability network. 

Inspired by this work,~\textcite{wangMinimumFleetRidesharingAware2021} propose a method to solve the fleet-sizing problem in a ridesharing context.
The authors formalize the problem as finding a minimum cover in the set of all possible dispatching graphs: the dispatching tree cover. 
They prove that the dispatching tree cover problem is NP-Hard and propose a heuristic algorithm; then, they evaluate its performance both analytically and on a case study in Shenzhen. 

\textcite{xuNetworkFlowBasedEfficientVehicle2022} extend~\textcite{vazifehAddressingMinimumFleet2018} by considering the limitation of the number of available vehicles in particular zones.
To achieve this, they transform the shareability network into a min-cost flow problem, which also incorporates vehicles as network nodes.
The zone constraint is represented by edges between vehicles and requests: these exist only if the request originates in the same zone where the vehicle is present.
As the authors point out, the formulated min-cost flow problem can be solved in polynomial time by the network simplex algorithm.

Similarly to~\textcite{wangMinimumFleetRidesharingAware2021}, ~\textcite{quHowManyVehicles2022} also propose to incorporate ridesharing into the fleet-sizing problem. Contrary to their predecessors, however, they decouple the problem: first, they solve the ridesharing (DARP) problem, and next, they apply the method from~\textcite{vazifehAddressingMinimumFleet2018} on the graph composed from the computed ridesharing plans. 
Additionally, the matching phase computes the utility with regard to an anticipated travel demand computed from an ensemble model. 
Decoupling the problem breaks the guarantee of optimality. 
However, it means that the minimal path cover in the shareability graph can be solved in polynomial time, as in~\textcite{vazifehAddressingMinimumFleet2018}.

\subsection{Ridesharing}
\label{sec:ridesharing_intro}
Apart from fleet-sizing, shareability networks can be used to solve other problems related to the MoD.
The concept of shareability network itself has been introduced in~\textcite{santiQuantifyingBenefitsVehicle2014}.
The authors use a sharability network to study the pairwise sharability trips in Manhattan.
Note that, differently than in the fleet sizing articles, here the trips are connected in a sharability network not only in case they can be connected consecutively, but also if they can share the ride.
In conclusion, we obtain an optimal solution for the ridesharing (DARP) problem limited to two persons per vehicle, provided that we optimize the shareability network using the minimum-weighted matching, as the article proposes. 

\textcite{alonso-moraOndemandHighcapacityRidesharing2017} then lift the two-passenger limit and propose an optimal algorithm to solve the online ridesharing (DARP) problem.
However, due to the computational complexity, they introduce several heuristic relaxations to compute the solution in a reasonable time.
Later, Čáp and Alonso-Mora have analyzed the trade-off between
the operation cost and service quality with this method, which they have called the
vehicle-group assignment method (VGA)~\cite{capMultiObjectiveAnalysisRidesharing2018}.

Finally, in~\textcite{fiedlerLargescaleOnlineRidesharing2022}, the authors have examined the optimal version of the VGA algorithm from the perspectives of system efficiency and computational time. An important finding of this work is that the optimal assignments can be computed in practice only if we limit the time horizon of the instance, i.e., if we try to match only requests with origin times within a short period.
At the same time, it shows that the relaxed version performs poorly if the time horizon is longer, being outperformed by the baseline method: the insertion heuristic.
This leads us to a question: could we obtain a better solution by computing the matching over a short time horizon and then connecting the resulting plans by solving the min-weight matching over the shareability network?

\subsection{Contribution}
\label{sec:contribution}
In this paper, we extend the concept of sequential shareability with time windows, a well-known concept from fleet-sizing and DARP.
In simple words, we allow delaying plans for the purpose of enabling more connections between them if the delay does not violate the maximum delay, typically declared in the form of a time window.
The contribution is four-fold:
\begin{enumerate}
\item We \textbf{extend the formulation} of the sequential shareability with time windows.
The novel formulation is more complex, however, in line with the classical DARP, which includes time windows (Section~\ref{sec:problem_formulation}). 
\item We \textbf{design a method} that can minimize the overall cost while generating only a fraction of valid delayed plan variants (Section~\ref{sec:proposed}). 
\item We deliver a \textbf{proof of optimality} for the proposed method: we prove that given a set of plans and their maximum delay, our method generates a set of connections with a minimal cost (Section~\ref{sec:proposed}).
\item We demonstrate the capabilities of our method to solve large static DARP on \textbf{case studies} of four areas: New York City, Manhattan, Chicago, and Washington, DC (Section~\ref{sec:case_studies}).
\end{enumerate}

\section{The Plan Chaining Problem}
\label{sec:problem_formulation}
Here, we formulate the problem of \emph{chaining} plans with time windows. 
The formulation is independent of the "real-world" problem, i.e., we can use the same formulation for fleet-sizing, dispatching, and potentially other problems with a similar structure. 
The main difference between our formulation and the previous formulations~(\textcite{vazifehAddressingMinimumFleet2018,quHowManyVehicles2022}) is that we allow plan delays that respect the given time windows. 
The previous formulations chain plans with fixed plan start/end times, despite the underlying problem being some variant of vehicle routing problem with time windows.
Because the time-windows extension makes the network formulation quite complex, we do not use it as a problem formulation, in contrast to previous works~\cite{vazifehAddressingMinimumFleet2018,xuNetworkFlowBasedEfficientVehicle2022,quHowManyVehicles2022}.
Instead, in this section, we propose a more compact formulation (see Problem~\ref{prob:chaining} later in this section).
However, the network formulation, loosely connected to the shareability network introduced in~\textcite{vazifehAddressingMinimumFleet2018}, is a backbone of the proposed method, and it is presented in Section~\ref{sec:proposed}.

We say that the chaining problem is a problem of transforming given vehicles and plans to the minimum cost chains, that consist of one vehicle and one or more plans, such that all the time constraints are met, each vehicle is at the beginning of no more than one chain, and all plans are part of exactly one chain.

For the purpose of plan chaining, the internal structure of the plan (pickup and drop-off times and locations) is irrelevant. Instead, each plan $ p $ from the set of plans $ P $ is defined by the origin time $ \tor{p} $, destination time $ \tde{p} $, and maximum delay $ \dmax{p} $ which represent the plan's time window.
Each vehicle $ v $ from the set of vehicles as $ V $ is determined by its start time $ \tst{v} $. 

We assume that travel times between all plans and vehicles $ a $ and $ b $ are known in the form of a travel time function $ \ftt{a}{b}, a, b \in P \cup V $.
Note that the function parameter order matters here, as the travel time is often asymmetrical in real use cases.
Analogously, we define the travel cost function $ \fc
{a}{b} $, which represents the travel cost (if we optimize for minimum travel time, we can set $ f_{tc} = f_{tt} $).

To use the time flexibility offered by the plans' maximum delay, we introduce \emph{delayed plan variants}.
For each plan $ p $, there is a set of possible delayed plan variants $ \PV{p} $.
We denote the union of all such sets as $ \PV{} $.
Each $ p' $ in $ \PV{p} $ is defined by the delay $ \dep{p'} \leq \dmax{p} $ over the original plan.
As consequence it holds that $ \tor{p'} = \tor{p} +  \dep{p'} $ and $ \tde{p'} = \tde{p} + \dep{p'} $.

We use the term \emph{chaining} because we create sequences called \emph{plan chains}. 
\begin{defn}
\label{def:plan-chain}
A plan chain $ c =\{c_i\}_{i\in\mathbb{N}}$ is a sequence such that:
\begin{alignat}{1}
    &c_1 \in V \\ 
    &\forall c_i \in c \setminus \{c_1\},\quad c_i \in P \cup \PV{} \\ 
   &\tst{c_1} + \ftt{c_1}{c_2} \leq \tor{c_2} \\
   &\forall i \in [3, |c|],\quad  \tde{c_{i - 1}} + \ftt{c_{i - 1}}{c_i} \leq \tor{c_i} 
\end{alignat}
\end{defn}

A set of all plan chains that can be constructed from vehicles $ V $ plans $ P $ (and associated delayed plans $ \PV{} $ generated from $ P $) will be denoted as $ \mathcal{C}(P,V) $.

Finally, we define the \emph{chaining problem}: 

\begin{prob}[Plan Chaining]
\label{prob:chaining}

Given a set of plans $ P $ and a set of vehicles $ V $, compute the set of plan chains $ C $ for which the travel cost between consecutive elements is minimal:
\begin{equation}
    \min_{C \subset \mathcal{C}_P^V}\displaystyle\sum_{c \in C}\sum_{i = 2}^{|c|} \fc{c_{i - 1}}{c_i}
\end{equation}

such that exactly one variant per each plan is contained in exactly one chain and each vehicle is at the beginning of no more than one chain:
\begin{alignat}{2}
\displaystyle
&\sum_{c \in C} \sum_{i = 2}^{|c|}\mathbf{1}_{\PV{p}}(c_i) = 1, \quad &\forall p \in P \\ 
&\sum_{c \in C}[c_1 = v] \leq 1, &\forall v \in V 
\end{alignat}
\end{prob}

\subsection{Chaining Problem configuration for solving various transportation problems}
We can solve various problems related to on-demand systems by solving the chaining problem with different configurations.

For fleet-sizing, we create a dedicated vehicle for each plan and set the travel cost to a constant for every used vehicle:
\begin{equation}
    \fc{a}{b} = 
    \begin{cases}
    \text{1}, &\text{if } a \in V,  \\
    0, &\text{otherwise}.
    \end{cases}
\end{equation}
This way, the number of vehicles will be minimized without considering the travel cost between plans.

For minimum cost dispatching (DARP), we just set the $ \fc{a}{b} $ to capture the cost of travel between $ a $ and $ b $; for example, we can set:
\begin{equation}
    \fc{a}{b} = \ftt{a}{b} \quad \forall a, b \in V \cup P \cup \PV{}.
\end{equation}

To prevent long waiting times, we can set $ \fc{a}{b} = \infty $ for all $ a $ and $ b $ where the $ \tor{b} - \tde{a} \geq \delta $, where $ \delta $ is the maximum waiting time.
Analogously, we can penalize waiting times gradually by increasing the travel cost function value proportionally to the waiting time.

\section{Proposed method}
\label{sec:proposed}
In this section, we describe the proposed method and prove that it solves the chaining problem optimally.
The fleet-sizing article by~\textcite{vazifehAddressingMinimumFleet2018} formulates the fleet-sizing problem as the \emph{minimum path cover} (see Figure~\ref{fig:min_path_cover}).
\begin{figure}
    \centering
    \includegraphics[width=0.6\columnwidth]{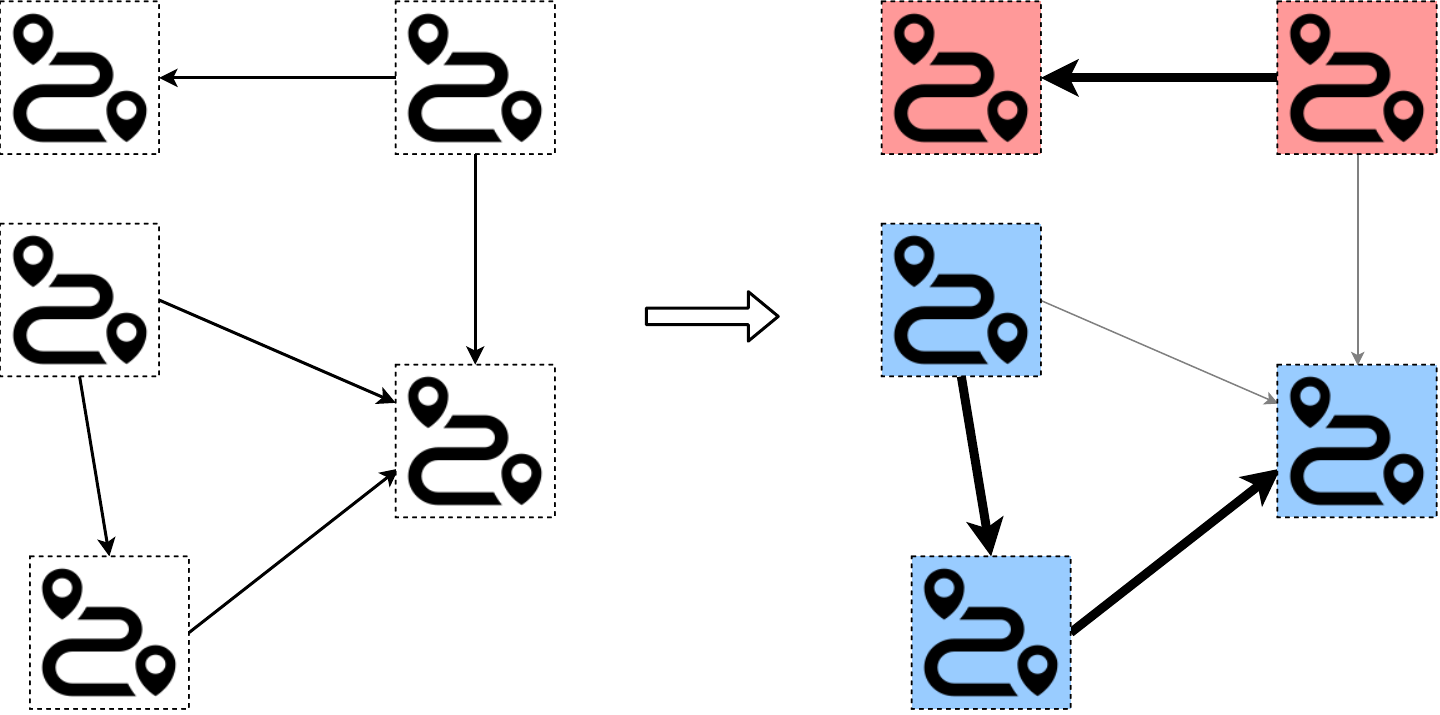}
    \caption{Example of using the minimum path cover to solve the fleet-sizing problem. 
    On the left, there is a vehicle shareability graph. 
    An arrow between any two plans signals that these plans can be served sequentially.
    On the right, there is the minimum path cover. 
    Each color represents a chain of plans to be served by one vehicle. 
    The connections (arrows) between plans in the chain are bold.
    }
    \label{fig:min_path_cover}
\end{figure}
Also, the authors conclude that the minimum path cover is equivalent to the maximum bipartite matching (see Figure~\ref{fig:maximum_matching}) and, therefore, it can be solved in polynomial time by the Hoptcroft-Karp algorithm.
\begin{figure}
    \centering
    \includegraphics[width=0.9\columnwidth]{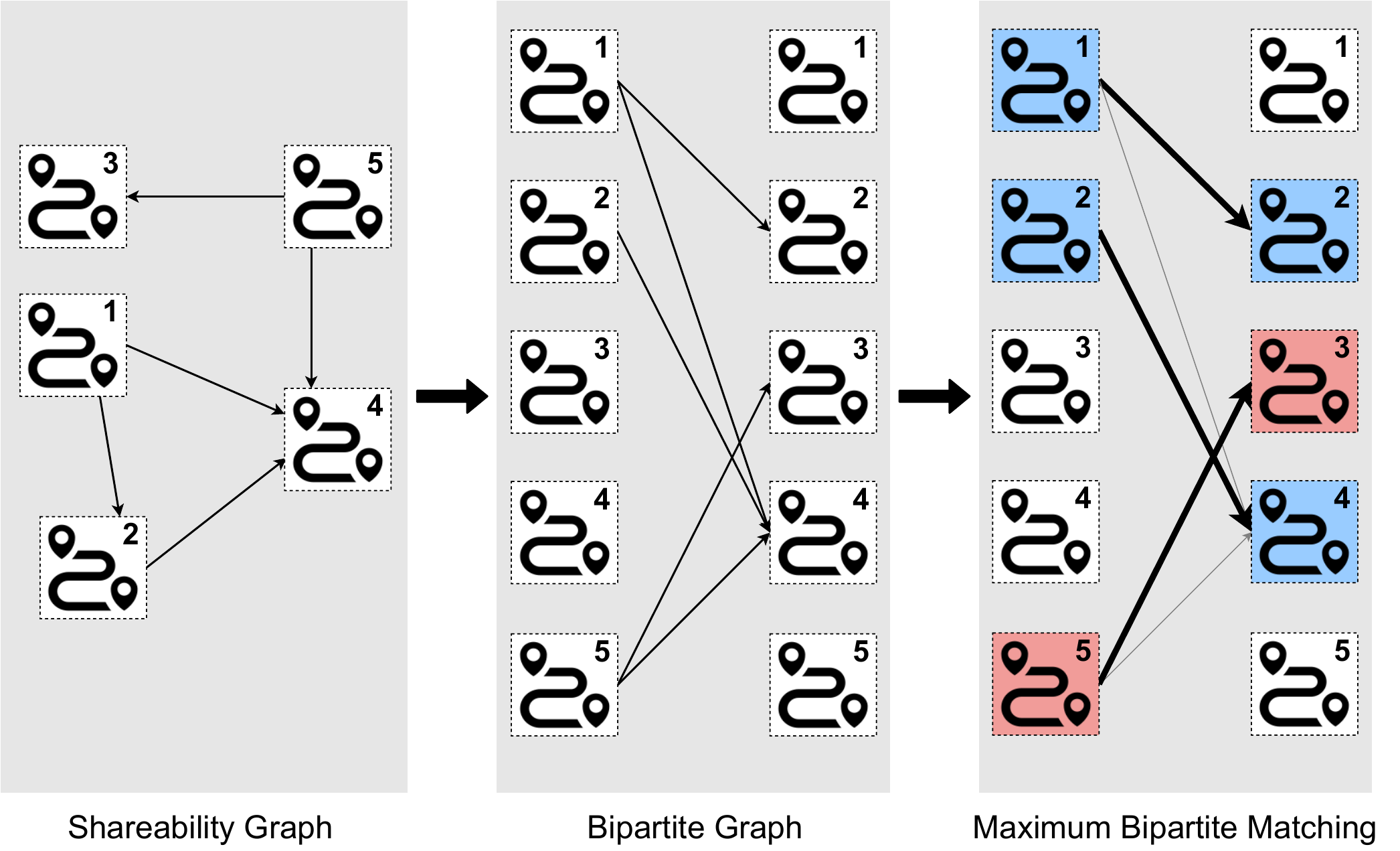}
    \caption{
    Example showing how the min path cover can be converted to the maximum bipartite matching. 
    We use the same example as in Figure~\ref{fig:min_path_cover}; plans are now numbered for clarity.
    Each color represents a chain of plans to be served by one vehicle. 
    The connections (arrows) between plans in the chain are bold.
    }
    \label{fig:maximum_matching}
\end{figure}
Analogously, we can minimize the cost of the plan chains by replacing the maximum matching with the min-cost (perfect) matching, that is, by solving the assignment problem. 
This problem can also be solved in polynomial time (e.g., by the Hungarian Algorithm).
An example of min-cost matching is shown in Figure~\ref{fig:min_cost_matching}.
\begin{figure}
    \centering
    \includegraphics[width=0.6\columnwidth]{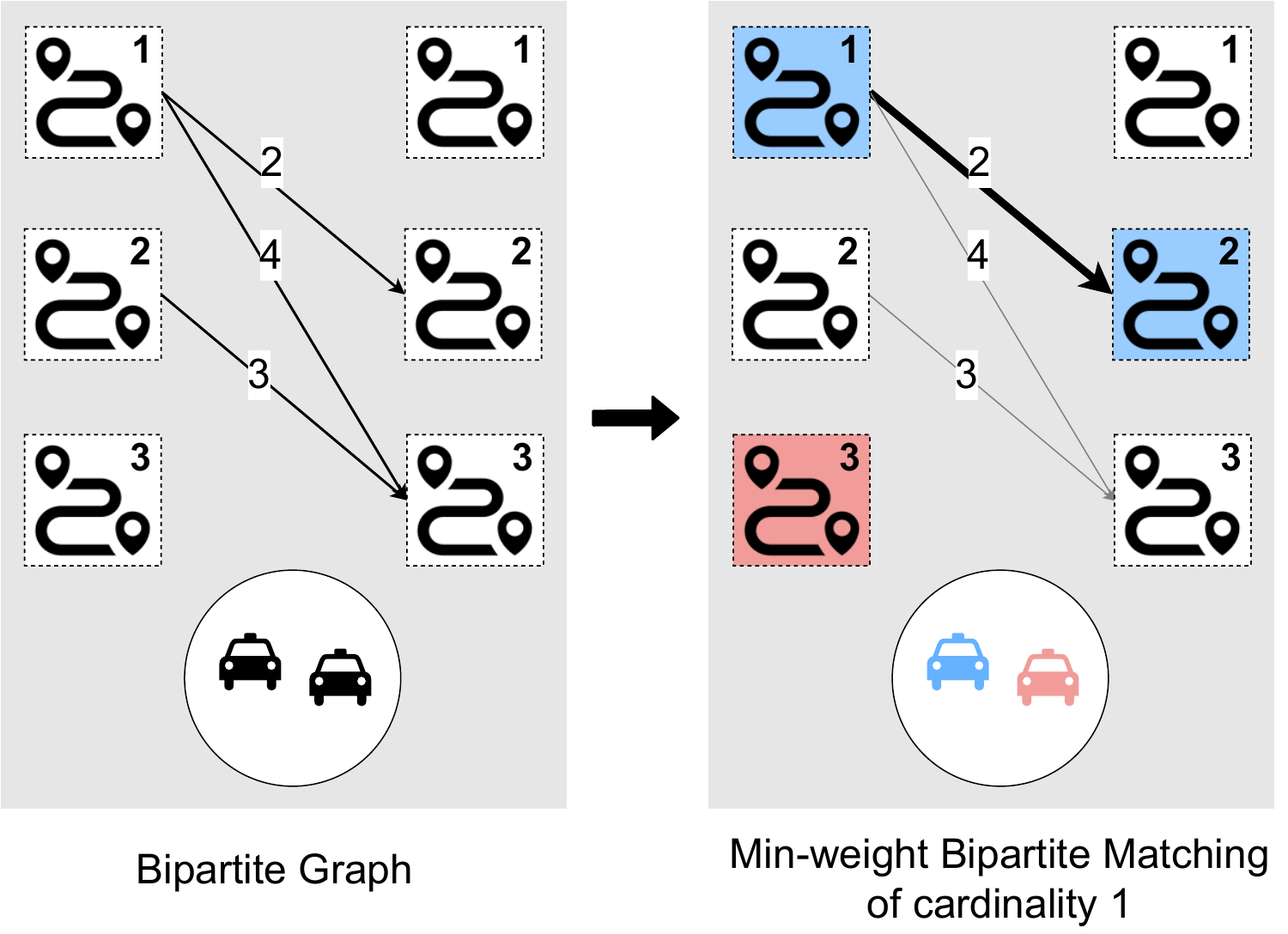}
    \caption{
    An example of plan chaining formulated as an assignment problem is a min-weight matching of a specific cardinality.
    The cardinality is given by the number of vehicles and plans: here, we have two cars available (below, in the circle) and three plans, resulting in cardinality 1 (at least one connection between plans).
    The numbers on the arcs determine the travel cost between the plans.
    Each color represents a chain of plans to be served by one vehicle. 
    The connection (arrow) between plans in the chain is bold.
    }
    \label{fig:min_cost_matching}
\end{figure}
Note that, here, we also need to represent vehicles.
For simplicity, the example assumes zero travel cost between the vehicle location and the start location of any plan.

However, even the assignment problem formulation is not suitable for optimal plan chaining, as it misses an important aspect: the time windows.
The delayed plans have a different shareability potential and, conclusively, delaying a plan results in a different bipartite graph.
To solve the chaining Problem \ref{prob:chaining}, we propose a two-step method.
First, we generate only those delayed plan variants that are necessary to guarantee the optimal solution to the chaining problem.
This variant generation process that generates only a fraction of all possible delayed plan variants is described in Section~\ref{sec:variant_generation}.
Second, we formulate the chaining problem as a constrained min-cost flow problem (MCFP).
This way, we can compute the optimal solution even for large instances despite the problem being NP-hard.
In Section~\ref{sec:mcfp_formalization}, we briefly introduce the min-cost flow problem (MCFP) formalization.
Then, in Section~\ref{sec:constrained_mcfp}, we present the constrained MCFP that represents the chaining problem and we show that an optimal solution of the proposed MCFP is an optimal solution of the corresponding chaining problem.

\subsection{Generating Plan Variants}
\label{sec:variant_generation}
In the chaining formulation, we used a set of delayed plans $ \PV{} $ for each plan $ p \in P $.
In theory, time is a continuous quantity, so the number of delayed plan variants for each plan is infinite.
However, in real-world computations, we usually consider a discrete time, which results in a finite number of plan variants. 
For example, with a resolution of one second and a plan that can be delayed by 20 seconds, there exist 20 (delayed) plan variants.

Nevertheless, as we prove further in this section, generating all possible plan variants is not necessary for guaranteeing the optimality of the plan chaining method. 
Instead, we generate only the variants that have the potential to extend the number of possible plan chains.
The algorithm we propose for generating these variants is displayed in Algorithm~\ref{alg:variant-generation}.

\begin{algorithm}
\Input{$ P $: a set of plans, $ V $: a set of vehicles}
\Output{Set of plan variants $ \PV{} $, set of connections between plans $ X $}

$ \Phi \leftarrow \{ \} $\;
$ X \leftarrow \{ \} $\;
variant\_queue $ \leftarrow $ empty queue\;

\BlankLine

\Fn{try\_connect($ a $: Plan or Vehicle, $ b $: Plan)}{
    \If{$ a \in P \cup \PV{} $}{
        min\_delay $ \leftarrow \ftt{a}{b} - (\tor{b} - \tde{a}) $ \; \label{algl:min_delay}
    }
    \Else{ 
        
        min\_delay $ \leftarrow \ftt{a}{b} - (\tor{b} - \tst{a}) $ \; 
    }
    \If{min\_delay $ \leq 0 $}{
        $ X \leftarrow X \cup (a, b) $\;
    }
    \ElseIf{min\_delay $ \leq \dmax{b} $}{
        $ \phi \leftarrow \fd{b}{min\_delay} $\;
        $ X \leftarrow X \cup (a, \phi) $\;
        $ \Phi \leftarrow \Phi \cup \phi $\;
        variant\_queue.push($ \phi $)\;
        
    }
}

\BlankLine

\For{$ a \in P \cup V $}{
    \For{$ b \in P \setminus a $}{
    	$ \ftc{a}{b}{} $\; \label{algl:ftc-plan}
	}
}
\While{variant\_queue \Not empty}{
    $ \phi \leftarrow $ variant\_queue.pop()\;
    \For{$ p \in P $}{
        \If{$ \phi $ not delayed variant of $ p $}{
            $ \ftc{\phi}{p}{} $\; \label{algl:ftc-variant}
        }
    }
}

\caption{\label{alg:variant-generation}Algorithm for generating plan variants.}
\end{algorithm}

Apart from generating variants, the algorithm also generates possible \emph{connections} between vehicles and plans.
We define the connection $ a \in P \cup V \cup \PV{} $ and $ b \in P \cup \PV{} $ as an (ordered) pair $ (a, b) $. 
Plan or vehicle $ a $ can be connected to plan $ b $ only if: 1) $ a $ is a vehicle or the non-delayed version of $ a $ is different than the non-delayed version of $ b $: $ \PV{a} \neq \PV{b} $; and 2) the time difference between them is less than or equal to the travel time between them:
\begin{equation}\label{eq:possibleConnectionsP}
\ftt{a}{b} \leq
\begin{cases}
    \tor{b} - \tde{a} & \quad \text{if } a \in P \cup \PV{} \\
    \tor{b} - \tst{a} & \quad \text{if } a \in V
\end{cases}
\end{equation}

The variant generation algorithm first tries to connect each plan and vehicle $ a $ with each plan $ b \neq a $. 
If the connection is not possible, we compute a delay for plan $ b $ as
\[
\dep{b} = \ftt{a}{b} - 
\begin{cases}
    (\tor{b} - \tde{a}) & \quad \text{if } a \in P \cup \PV{} \\
    (\tor{b} - \tst{a}) & \quad \text{if } a \in V
    
\end{cases}
\]
If $ \dep{b} \leq \dmax{b} $, we generate a delayed plan variant of $ b $ with delay $ \dep{b} $ and the corresponding connection.

Then, analogously, we try to connect each variant $ \phi \in \PV{} $ with each plan $ p \in P $ and try to delay the plan $ p $ if necessary:

\[
\dep{p} = \ftt{\phi}{p} - (\tor{p} - \tde{\phi}), 
\]

It is clear that Algorithm~\ref{alg:variant-generation} does not generate all possible plan variants.
However, we can prove that it generates all plan variants necessary to construct the optimal solution to the chaining problem.

\begin{thm}
\label{thm:variant-generation} 
There exists an optimal solution to the chaining problem that contains only non-delayed plans and plan variants generated by Algorithm~\ref{alg:variant-generation}. 
\end{thm}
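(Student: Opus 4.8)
The plan is to prove the statement by an exchange argument: I start from an arbitrary optimal solution $C^\ast$ of Problem~\ref{prob:chaining} (one that may use any delayed variants whatsoever) and transform it, chain by chain, into an equally good solution in which every plan is either non-delayed or one of the minimum-delay variants emitted by Algorithm~\ref{alg:variant-generation}. Two structural observations make this possible. First, travel time and travel cost are properties of the origin/destination locations of the plans and do not depend on the delay, so $\ftt{a}{p'}=\ftt{a}{p}$ and $\fc{a}{p'}=\fc{a}{p}$ for every variant $p'$ of $p$; consequently the objective $\sum_{c}\sum_{i}\fc{c_{i-1}}{c_i}$ is invariant under changing the delays carried by the plans inside the chains. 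Second, for a fixed predecessor $a$ and a plan $b$, the chaining constraints of Definition~\ref{def:plan-chain} are equivalent to $\dep{b}\ge\ftt{a}{b}-(\tor{b}-\tde{a})$ (with $\tst{a}$ replacing $\tde{a}$ when $a\in V$); the right-hand side is exactly the value \texttt{min\_delay} computed in line~\ref{algl:min_delay}, so the smallest feasible delay of $b$ given $a$ is precisely the delay of the variant the algorithm produces, or $0$ (the non-delayed plan) when \texttt{min\_delay} $\le 0$.

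Next I would perform the reduction on a single chain $c=(v,q_1,\dots,q_k)$ of $C^\ast$, where $q_j$ denotes a variant of an underlying plan $p_j$, processing the indices in increasing order. I replace $q_1$ by the variant $\hat q_1$ of $p_1$ having the minimum delay admissible from $v$; by the second observation this delay is at most $\dep{q_1}$, hence $\tde{\hat q_1}\le\tde{q_1}$, and since travel times are delay-free the connection $\hat q_1\to q_2$ stays feasible. Proceeding inductively, once $q_1,\dots,q_{j-1}$ have been replaced by reduced variants $\hat q_1,\dots,\hat q_{j-1}$ with $\tde{\hat q_{j-1}}\le\tde{q_{j-1}}$, the minimum delay of $p_j$ feasible from $\hat q_{j-1}$ is no larger than the delay $q_j$ already carries in $C^\ast$; setting $\hat q_j$ to that minimum therefore keeps $\hat q_{j-1}\to\hat q_j$ feasible, and because $\tde{\hat q_j}\le\tde{q_j}$ it also preserves feasibility towards $q_{j+1}$. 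This produces a feasible chain $(v,\hat q_1,\dots,\hat q_k)$ over the same underlying plans and---by the invariance noted above---with identical cost. Carrying this out for every chain leaves the per-plan and per-vehicle constraints of Problem~\ref{prob:chaining} untouched, so the resulting solution $\hat C$ is again optimal.

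Finally I would verify that each $\hat q_j$ is in fact one of the variants generated by Algorithm~\ref{alg:variant-generation}, which I expect to be the crux of the proof. The argument is that the algorithm builds $\PV{}$ as a closure over predecessors: the initial double loop (line~\ref{algl:ftc-plan}) calls \texttt{try\_connect} from every vehicle and every non-delayed plan, while the while loop (line~\ref{algl:ftc-variant}) re-invokes it from every variant already generated, each newly created variant being pushed onto \texttt{variant\_queue}. I would show by induction on $j$ that $\hat q_{j-1}$ is always a vehicle, a non-delayed plan, or a member of $\PV{}$, so the call creating $\hat q_j$ with delay equal to its \texttt{min\_delay} from $\hat q_{j-1}$ is executed by exactly one of the two loops; by the second observation the delay it assigns matches the minimum-delay value chosen in the reduction. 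The delicate point is the coverage and termination of this closure: I must confirm that the queue discipline guarantees that every reduced variant arising along any chain of $C^\ast$ is eventually emitted, that no reduced variant ever requires a predecessor outside $V\cup P\cup\PV{}$, and that the exclusion of same-plan connections in line~\ref{algl:ftc-variant} never blocks a connection that a valid chain actually uses (consecutive chain elements have distinct underlying plans). Once this is established, $\hat C$ witnesses the claimed optimal solution.
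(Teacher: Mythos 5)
Your exchange argument is essentially the paper's own proof in a different wrapper: the paper likewise proceeds by induction along the chain, relying on the fact that \texttt{try\_connect} always emits the minimum feasible delay (Lemma~\ref{lem:variant-generation-earliest-version}) and that cost is invariant under delays (Assumption~\ref{cor:equal-variant-cost}), and its third case in the inductive step is exactly your observation that no variant of $p_n$ reachable from the chain can do better than the minimum-delay one. Your explicit monotonicity bookkeeping ($\tde{\hat q_j}\le\tde{q_j}$ propagating down the chain) just makes rigorous what the paper states informally, so the proposal is correct and matches the paper's approach.
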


Before proving Theorem~\ref{thm:variant-generation}, we will analyze some properties of Algorithm~\ref{alg:variant-generation}.
First, we assume that the connection cost does not depend on time:
\begin{ass}
\label{cor:equal-variant-cost}
All plan chains that differ only in the delay of their plan variants have equal costs.
\end{ass}

Next, note that when delaying the plan, the Algorithm~\ref{alg:variant-generation} always uses the minimum possible delay (see function $ \mathtt{try\_connect} $).

\begin{lem}
\label{lem:variant-generation-earliest-version}
The function $ \mathtt{try\_connect} $ from the variant generation algorithm always creates the connection from plan $ o $ to plan $ p $, if possible, using the minimum possible delay for plan $ p $. 
\end{lem}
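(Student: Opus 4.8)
The plan is to reduce the statement to a one-line arithmetic comparison between the feasibility condition for a connection, given in~\eqref{eq:possibleConnectionsP}, and the quantity \texttt{min\_delay} computed on line~\ref{algl:min_delay}. Concretely, I would first write down, as a function of the delay $\delta \geq 0$ applied to $p$, the exact condition under which the connection from $o$ to the delayed variant $p'$ is feasible; then solve this inequality for $\delta$ to obtain the smallest admissible delay; and finally verify that $\mathtt{try\_connect}$ produces exactly this delay in each of its branches.

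For the derivation, fix the predecessor $o$ and consider first the case $o \in P \cup \PV{}$. A delayed variant $p'$ of $p$ with delay $\delta$ satisfies $\tor{p'} = \tor{p} + \delta$ while keeping the same physical origin, so the travel time into $p$ is unchanged: $\ftt{o}{p'} = \ftt{o}{p}$. Substituting into the feasibility condition~\eqref{eq:possibleConnectionsP} gives $\ftt{o}{p} \leq \tor{p} + \delta - \tde{o}$, i.e. $\delta \geq \ftt{o}{p} - (\tor{p} - \tde{o})$. Writing $m \udef \ftt{o}{p} - (\tor{p} - \tde{o})$ for the right-hand side --- which is precisely the value \texttt{min\_delay} --- the set of feasible delays is $\{\delta \geq 0 : \delta \geq m\}$, so the minimum feasible delay equals $\max(0, m)$.

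It then remains to match $\max(0,m)$ against the three branches of $\mathtt{try\_connect}$. If $m \leq 0$, delay $0$ is admissible and the function records the connection $(o, p)$ with the undelayed plan, i.e. delay $0 = \max(0,m)$. If $0 < m \leq \dmax{p}$, the minimum feasible delay is $m$, and the function calls $\fd{p}{m}$ to create a variant with delay exactly $m$ and records $(o, \phi)$. If $m > \dmax{p}$, no $\delta \in [0, \dmax{p}]$ satisfies the condition, so the connection is impossible and none is created, which is consistent with the ``if possible'' qualifier. The vehicle case $o \in V$ is identical after replacing $\tde{o}$ by $\tst{o}$, as reflected both in the second branch of~\eqref{eq:possibleConnectionsP} and in the \texttt{else} branch of $\mathtt{try\_connect}$.

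The only non-mechanical point, and hence the main thing to pin down, is the invariance $\ftt{o}{p'} = \ftt{o}{p}$ under delay: it is exactly this fact that lets $\mathtt{try\_connect}$ compute the correct minimum delay from the \emph{undelayed} times $\tor{p}$ and $\tde{o}$ alone, without iterating over candidate delays. I expect this to follow immediately from the model, since by construction a delay shifts only $\tor{\cdot}$ and $\tde{\cdot}$ and leaves the plan's origin location --- the target the predecessor must reach --- untouched. I would therefore state this invariance explicitly as the single supporting observation before running the arithmetic above, after which the rest is a routine case analysis.
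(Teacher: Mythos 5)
Your proof is correct and follows essentially the same route as the paper's: inspect the value computed on line~\ref{algl:min_delay} and run a case analysis over the three branches of $\mathtt{try\_connect}$. The only difference is one of rigor rather than approach --- you derive the minimum delay $\max(0,m)$ explicitly by solving the feasibility inequality~\eqref{eq:possibleConnectionsP} for $\delta$ and you isolate the travel-time invariance $\ftt{o}{p'} = \ftt{o}{p}$ as the needed supporting fact, both of which the paper's proof leaves implicit.
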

\begin{proof}
We can prove this by inspecting the algorithm. 
On line~\ref{algl:min_delay}, the minimum delay is computed as a difference between travel time between $ o $ and $ p $ ($\ftt{o}{p} $) and the time difference between these plans.
We can see that this is indeed the minimum delay for a connection to be possible, as the delay is basically equal to the travel time minus the already existing time difference between plans.
On the following lines, we can see that either:
\begin{itemize}
    \item $ \mathtt{min\_delay} \leq 0 $: no delay is needed, so we keep the plan $ p $ as is. 
    As each original plan has the minimum possible delay (so we cannot assign a negative delay to a plan), this is indeed the minimum possible delay.
    \item $ 0 < \mathtt{min\_delay} \leq \dmax{p} $: in this case, we assign the minimum delay computed as described above.
    \item $ \mathtt{min\_delay} > \dmax{p} $: connection is impossible as the time constraints of plan $ p $ would be violated.
\end{itemize}
Thus, the claim is proved.
\end{proof}

Finally, note that for a sequence of plans to be a plan chain, all its sub-sequences have to be also plan chains, as stated in the following.
\begin{lem}
\label{lem:sub-chainings}
If we remove a plan from the end of a plan chain, the resulting sequence is also a plan chain.
\end{lem}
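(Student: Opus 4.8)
The plan is to show that the truncated sequence satisfies every clause of Definition~\ref{def:plan-chain}, the point being that deleting the final element can only remove constraints, never introduce new ones. Write $c=(c_1,\dots,c_n)$ with $n=|c|\geq 2$ (the chain must contain at least one plan, otherwise there is nothing to remove), and let $c'=(c_1,\dots,c_{n-1})$ be the sequence obtained by deleting $c_n$, so that $|c'|=n-1$ and $c'_i=c_i$ for every $i\leq n-1$. The whole argument is then a routine verification that each defining clause is inherited by $c'$.

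First I would dispatch the two membership conditions. The head is unchanged, so $c'_1=c_1\in V$ gives the first clause. For the second, $c'\setminus\{c'_1\}$ is a subset of $c\setminus\{c_1\}$, and every element of the latter already lies in $P\cup\PV{}$, so the required membership is inherited directly. Next I would check the two timing inequalities. The inequality $\tst{c_1}+\ftt{c_1}{c_2}\leq\tor{c_2}$ mentions only $c_1$ and $c_2$; provided $n-1\geq 2$ these two entries survive truncation unchanged, so the clause transfers verbatim. The remaining inequalities, originally indexed by $i\in[3,|c|]$, become indexed by $i\in[3,n-1]$, a subrange of $[3,n]$; for each such $i$ both $c'_{i-1}$ and $c'_i$ coincide with $c_{i-1}$ and $c_i$, so every surviving inequality is literally one already guaranteed for $c$. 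Hence all four clauses hold for $c'$, and $c'$ is a plan chain.

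Because the statement is essentially a restriction/monotonicity observation, I do not expect any genuine obstacle. The only point requiring care is the boundary case in which truncation leaves just the vehicle $c_1$ (i.e.\ the original chain had exactly one plan): here the timing clauses refer to an entry $c_2$ that no longer exists, so I would read them as conditional on the presence of $c_2$ and hence vacuously satisfied, leaving the degenerate one-element sequence as a trivial plan chain. For cleanliness I would also remark that the identical reasoning shows that \emph{every} prefix of a plan chain is a plan chain, with the removal of a single terminal element being the special case asserted by the lemma; this framing makes the index-bookkeeping transparent and sidesteps any ambiguity about repeated truncation.
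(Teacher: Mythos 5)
Your proof is correct and follows the same route as the paper, which simply states that the claim follows trivially from Definition~\ref{def:plan-chain}; you have merely spelled out the clause-by-clause verification that the paper leaves implicit. The extra care about the boundary case and the observation that every prefix of a plan chain is a plan chain are sound but not needed beyond what the paper asserts.
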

\begin{proof}
It trivially comes from Definition~\ref{def:plan-chain}.
\end{proof}

Now, we can go back to Theorem~\ref{thm:variant-generation}:
\begingroup
\def\thethm{\ref{thm:variant-generation}}
\begin{thm}
An optimal solution to the chaining problem exists that contains only non-delayed plans and plan variants generated by Algorithm~\ref{alg:variant-generation}.
\end{thm}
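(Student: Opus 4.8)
The plan is to argue by an \emph{exchange argument}: starting from any optimal solution, I transform it into an equally good optimal solution that uses only non-delayed plans and the variants emitted by Algorithm~\ref{alg:variant-generation}, without altering which base plan sits in which chain nor which vehicle heads each chain. First I would fix an arbitrary optimal solution $C^{*}$ and observe that the constraints of Problem~\ref{prob:chaining} (one variant per plan, at most one chain per vehicle) restrict only which \emph{base} plan occupies which position; they are insensitive to the particular delay of each variant. Hence any modification that keeps the base plans and their chain positions fixed, changing only their delays, remains feasible with respect to those constraints, and by Assumption~\ref{cor:equal-variant-cost} it leaves the objective value unchanged. It therefore suffices to show that, chain by chain, every delayed variant can be replaced by one the algorithm generates while preserving the validity conditions of Definition~\ref{def:plan-chain}.

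Next I would process each chain $c = (c_1, c_2, \ldots, c_{|c|})$ with $c_1 \in V$ from its head towards its tail, maintaining the invariant that after step $i$ the prefix $(c_1,\dots,c_i)$ consists only of the vehicle, non-delayed plans, and algorithm-generated variants, and that the whole (modified) sequence is still a valid chain (Lemma~\ref{lem:sub-chainings} guarantees each processed prefix is itself a legitimate chain, which supports the inductive bookkeeping). At step $i$ I replace $c_i$ by the variant of its base plan carrying the minimum delay that still satisfies the incoming condition $\tde{c_{i-1}} + \ftt{c_{i-1}}{c_i} \le \tor{c_i}$ (or the analogous condition with $\tst{c_1}$ when $i=2$). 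The crux is to identify this minimum-delay variant with one the algorithm emits: since $c_{i-1}$ is, by the invariant, a vehicle, a non-delayed plan, or a variant already produced and thus enqueued, Algorithm~\ref{alg:variant-generation} has executed $\mathtt{try\_connect}(c_{i-1}, \cdot)$ against the base plan of $c_i$, and by Lemma~\ref{lem:variant-generation-earliest-version} that call produces exactly the minimum-delay variant required (or leaves $c_i$ non-delayed when no delay is needed). I would stress that $\ftt{c_{i-1}}{c_i}$ is unaffected by delays, since delaying a plan shifts its times but not its locations.

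The main obstacle — and the reason for processing front-to-back rather than in arbitrary order — is showing that lowering the delay of $c_i$ does not break the \emph{outgoing} connection to $c_{i+1}$. Here I would invoke a monotonicity observation: decreasing a plan's delay decreases both $\tor{c_i}$ and $\tde{c_i}$, which tightens the incoming condition (handled, because I set the delay to the exact minimum that satisfies it) but \emph{relaxes} the outgoing condition $\tde{c_i} + \ftt{c_i}{c_{i+1}} \le \tor{c_{i+1}}$. Moreover the minimum required delay is a non-decreasing function of the predecessor's delay, so since the predecessor $c_{i-1}$ was itself only ever decreased, the new delay assigned to $c_i$ is no larger than its original delay $\dep{c_i}$; as the original solution satisfied the outgoing condition at that larger delay (and $c_{i+1}$ is not yet modified), the outgoing condition continues to hold. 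Thus the invariant is preserved at every step.

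Finally I would conclude that after the last step the entire chain uses only non-delayed plans and algorithm-generated variants while remaining valid, and applying this procedure to every chain of $C^{*}$ yields a feasible solution of the required form whose cost equals that of $C^{*}$ by Assumption~\ref{cor:equal-variant-cost}. Since $C^{*}$ was optimal, the constructed solution is an optimal solution containing only non-delayed plans and the variants produced by Algorithm~\ref{alg:variant-generation}, which is precisely the claim.
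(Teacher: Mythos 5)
Your proof is correct and follows essentially the same route as the paper's: both normalize each chain front-to-back, replacing every element by the minimum-delay variant that Lemma~\ref{lem:variant-generation-earliest-version} identifies with the algorithm's output, and both rely on Lemma~\ref{lem:sub-chainings} and Assumption~\ref{cor:equal-variant-cost} to preserve feasibility and cost. If anything, your explicit monotonicity observation --- that lowering the predecessor's delay can only lower the successor's minimum required delay, so outgoing connections survive the exchange --- spells out a step that the paper's case analysis leaves implicit.
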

\addtocounter{thm}{-1}
\endgroup

\begin{proof}[Proof of Theorem \ref{thm:variant-generation}]
\label{proof:variant_generation_generates_all_variants}
We will proceed by induction and show that for each possible plan chain, Algorithm~\ref{alg:variant-generation} generates a set of connections that compose an equivalent plan chain that differs only in the plan delays (and in conclusion, has an equal cost, see Assumption~\ref{cor:equal-variant-cost}).

Let us start with a chain of length one: a vehicle.
As the algorithm does not discard any vehicles, it is clear that all chains of length one are covered.
Let a chain of length two be given. Thus, we have a vehicle $ v $ and a plan $ p $, and we try to connect them by a connection $ (v, p) $.
There are two possible cases:
\begin{enumerate}
    \item The connection $ (v, p) $ is not possible: we cannot generate the plan chain.
    \item The connection $ (v, p) $ is possible: as we can see, all such connections are created by Algorithm~\ref{alg:variant-generation} (lines line~\ref{algl:ftc-plan}~and~\ref{algl:ftc-variant}). 
\end{enumerate}
Analogously, the chains of length 3 (a vehicle and two plans) are generated by Algorithm~\ref{alg:variant-generation}.
Let us now assume that Algorithm~\ref{alg:variant-generation} generates all possible plan chains of length $ k\leq n $, and suppose we want to connect chain $(p_1, \cdots, p_{n+1})$. 
On the account of Lemma~\ref{lem:sub-chainings}, the problem boils down to connecting the chain $ (v, p_1, \cdots, p_n) $ to (possibly delayed) plan $ p_{n + 1} $.
There are three possible cases:
\begin{enumerate}
    \item The connection $ (p_n, p_{n + 1}) $ is not possible: we cannot generate the plan chain.
    \item The connection $ (p_n, p_{n + 1}) $ is possible: as we can see, all such connections are created by line~\ref{algl:ftc-plan} of Algorithm~\ref{alg:variant-generation} in case that $ \dep{p_{n + 1}} = 0 $; by line~\ref{algl:ftc-variant} otherwise.
    \item The connection $ (p_n, p_{n + 1}) $ would be possible only with an earlier variant of $ p_n $.
    However, according to Lemma~\ref{lem:variant-generation-earliest-version} and by induction hypothesis, we know that $ p_n $ is computed by the algorithm and is the earliest variant that can be connected to  $ p_{n - 1} $. Therefore the chain $ (v, p_1, \cdots, p_n, p_{n + 1}) $ is not possible.
\end{enumerate}

Thus, the proof is complete.
\end{proof}

\subsection{Min Cost Flow Problem Formalization}
\label{sec:mcfp_formalization}
The minimum-cost flow problem (MCFP, see~\textcite{ahujaNetworkFlowsTheory1993}) is an optimization problem of finding a minimum set of flows through a flow network that is in total equal to a specified amount of flow.

A \emph{flow network} is a directed graph $ G = (\vertices, E) $.
Each edge $ e $ from $ E $ is defined as a 4-tuple $ (f_e, l_e, u_e, c_e) $, where:
\begin{itemize}
    \item $ f_e $ is the flow of the edge $ e $, a variable,
    \item $ l_e $ is the lower bound of the edge $ e $, a constraint,
    \item $ u_e $ is the upper bound of the edge $ e $, a constraint,
    \item $ c_e $ is the cost of the edge $ e $, a constant.
\end{itemize}

Each vertex $ \vertex $ from a set $ \vertices $ has an associated supply value $ s_{\vertex} $. 
If $ s_{\vertex} > 0 $, the node is called a \emph{source}, if $ s_{\vertex} = 0 $ it is a \emph{transshipment node}, and if $ s_{\vertex} < 0 $ it is called a \emph{sink}.
We mark the set of edges originating in the vertex $ \kappa $ as $ O_{\vertex} $ and a set of edges with a destination in $ \vertex $ as $ D_{\vertex} $.

The min-cost network flow problem is then formulated as:

Minimize:

\begin{equation}
    \sum_{e \in E} c_e f_e,
\end{equation}

subject to 

\begin{equation}
\label{eq:mcfp-flow-conservation}
    \sum_{e \in O_{\vertex}} f_e - \sum_{e \in D_{\vertex}} f_e = s_{\vertex} \quad \forall \vertex \in \vertices,
\end{equation}

where $ f_e \in [l_e, u_e] \quad \forall e \in E $.
Here, Equation~\ref{eq:mcfp-flow-conservation} is the flow conservation constraint.

\subsection{Chaining as the Min-cost Flow Problem}
\label{sec:constrained_mcfp}
To find an optimal solution to the chaining problem given the vehicles, plans, and delayed plan variants and connections generated by Algorithm~\ref{alg:variant-generation}, we propose a constrained min-cost flow problem formulation exemplified in Figure~\ref{fig:chaining_example}.

\begin{figure}
    \centering
    \includegraphics[width=1\linewidth]{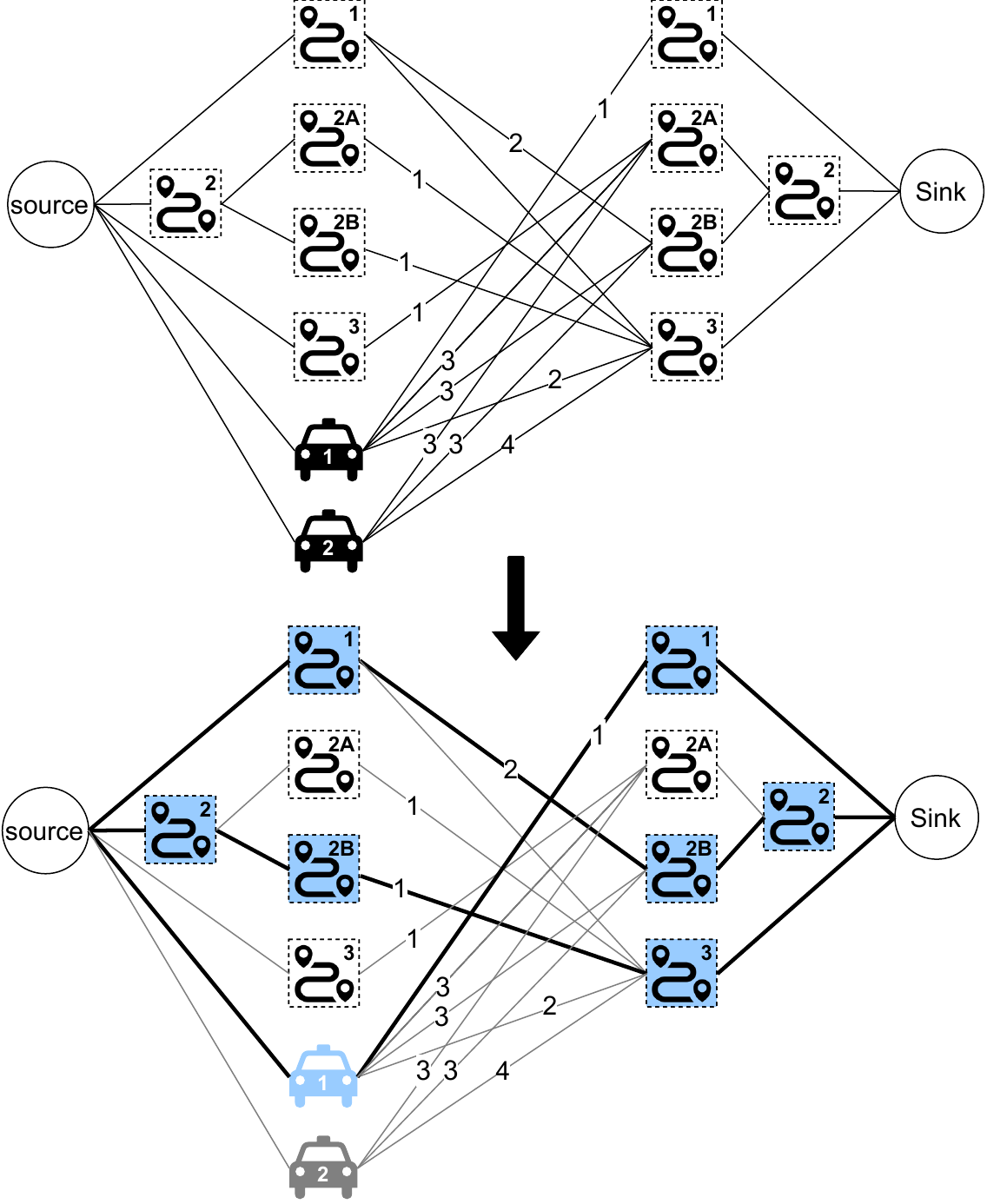}
    \caption{
    Example of the final chaining formulation formulated as an MCFP. 
    The vertices are, from the left: the source, then left plan vertices for each plan, left variant vertices for each variant (e.g., 2B translates to plan 2, variant B), and vehicle vertices, then right plan and variant vertices, and finally, the sink.
    If a plan has no delayed variants, there is no variant vertex.
    The travel cost is expressed by the number over the arc.
    Arcs without numbers have zero cost.
    The inflow of the source is equal to the number of plans, and the outflow of the sink is reversed to that.
    In the bottom image, there is the solution marked by bold arcs (used arcs with active flow). 
    For readability, vertices between solution arcs are painted blue.
    }
    \label{fig:chaining_example}
\end{figure}

There are seven types of nodes:
\begin{itemize}
    \item A single source and a single sink node,
    \item left and right plan nodes for each original plan,
    \item left and right variant nodes generated for plans with delayed variants.
    \item vehicle nodes
\end{itemize}
Each node has a zero supply, with the exception of the source node, which has a supply equal to the number of plans (three in the example), and the sink node, which has the inverse supply.
The edges are generated:
\begin{itemize}
    \item from source to each left plan node and each vehicle node,
    \item from each left plan node to each of its corresponding variant nodes,
    \item from each right variant node to its corresponding right plan node
    \item from each right plan node to sink,
    \item and from vehicles and left plans/variants to right plans/variants according to the output of the Algorithm~\ref{alg:variant-generation}.
\end{itemize}
Each edge has a direction from left to right. 
Also, for each edge $ e $, $ l_e = 0 $, and $ u_e = 1 $.
The cost of each edge is zero, with the exception of the edges between left plan/variant nodes and right plan/variant nodes that have a cost equal to the travel time between the destination of the previous plan and the origin of the next plan. 


To solve the chaining problem optimally, we solve this min-cost flow problem and then connect (chain) plans for each edge between the left plan/variant node and the right plan/variant node with active flow (flow $ = 1 $).
For example, in the solution illustrated in Figure~\ref{fig:chaining_example} by bold lines, the edges (vehicle 2, plan 1), (plan 1, plan 2B), and (plan 2B, plan 3) are part of the solution. 
Therefore, the resulting solution is a single plan chain (vehicle 2, plan 1, plan 2B, plan 3).

To guarantee that this process results in a feasible system plan, we need to constrain the min-cost flow problem such that a) only one variant has an active connection on the left part of the flow problem, b) only one variant has an active connection on the right part, and c) the connected variant on the left is the same as the one connected on the right.
Conveniently, conditions a) and b) are guaranteed by the flow conservation constraint. 
Next, we have to introduce a constraint to guarantee the condition c). We need two symmetric constraints:
\begin{equation}
    f(\vertex_l^p, \vertex_l^{\phi}) + \sum_{\substack{\phi' \in \PV{p} \\ \phi' \not\equiv \phi}} f(\vertex_r^{\phi'}, \vertex_r^p) \leq 1 \quad \forall p \in P, \forall \phi \in \PV{p}
\end{equation}
\begin{equation}
    f(\vertex_r^{\phi}, \vertex_r^p) + \sum_{\substack{\phi' \in \PV{p} \\ \phi' \not\equiv \phi}} f(\vertex_l^p, \vertex_l^{\phi'}) \leq 1 \quad \forall p \in P, \forall \phi \in \PV{p} 
\end{equation}
Here $ \vertex_l^p $ is the left plan vertex connected to the left variant vertex $ \vertex_l^{\phi} $, and $ \vertex_r^p $ is the right plan vertex connected to the right variant vertex $ \vertex_r^{\phi} $, 
As these constraints are generated for each variant node, there are no such constraints for plans without delayed plan variants.
Note that with these constraints, we lose the ability to use the LP solver because the flows could now be non-integer. 
Therefore, we have to solve this formulation using an ILP solver.

From the description above, it results that the proposed MCFP corresponds to problem~\ref{prob:chaining}.
Therefore, by optimally solving the MCFP, we obtain an optimal solution to the chaining problem.

\section{Demonstration: Chaining DARP Plans}
\label{sec:case_studies}
To demonstrate the possible application of the proposed chaining method, we use it as a component of a heuristic method for solving the ridesharing dial-a-ride problem (DARP).
The DARP is a problem of finding a set of vehicle plans that serve a given transportation demand with a given vehicle fleet while minimizing the total cost of the plans (usually the total travel time) and respecting the problem constraints (time windows, vehicle capacity, etc.).
There are three main groups of methods for solving DARP. 
First, there are exact methods, which can find the optimal solution to the problem but are computationally expensive and thus suitable only for specific instances.
An example of such a method is the Vehicle-group assignment (VGA) method~\cite{alonso-moraOndemandHighcapacityRidesharing2017}, or the branch-and-price scheme typically used in operational research literature~\cite{ropkeBranchCutPrice2009}.
Second, there are constructive heuristics, which create a suboptimal solution but are computationally efficient and typically can be used for any DARP instance.
Insertion heuristics~\cite{jawHeuristicAlgorithmMultivehicle1986} is an example of such a method.
Finally, there are metaheuristics, which are iterative methods that start with a suboptimal solution (usually obtained by solving the problem with some construction heuristic) and improve it over time.
Variable Neighborhood Search (VNS)~\cite{muelasVariableNeighborhoodSearch2013}, Adaptive Large Neighborhood Search (ALNS)~\cite{pfeifferALNSAlgorithmStatic2022}, and Genetic Algorithms~\cite{genhongImprovedGroupingGenetic2014} or their combinations~\cite{belhaizaHybridAdaptiveLarge2019} counts among popular methods in this category.

The heuristic we propose here for solving DARP belongs to the second category.
It divides the instance into batches that are solved optimally, and then the resulting plans are chained using the proposed method.
By dividing the instance into batches and chaining the plans later, we can avoid the computational complexity resulting from the long time horizon of the instance, reported previously in literature~\cite{fiedlerLargescaleOnlineRidesharing2022}.

We start by introducing the problem and the scheme of the proposed method based on optimal chaining (Section~\ref{sec:vga_chaining}).
Then, in Section~\ref{sec:instances}, we described briefly the instance and methods used in this demonstration, and finally, Section~\ref{sec:results} presents the results.

\subsection{A New Heuristic Method for Solving DARP Based on Optimal Chaining}
\label{sec:vga_chaining}
As we mentioned in the Introduction, one of the optimal methods for solving the ridesharing DARP, the Vehicle-group assignment (VGA) method \cite{alonso-moraOndemandHighcapacityRidesharing2017}, suffers from high computational complexity when solving instances spanning time periods longer than tens of seconds~\cite{fiedlerLargescaleOnlineRidesharing2022}.
Therefore, for offline ridesharing, where the task is to solve instances spanning a longer time, a different method is required.
In this demonstration, we propose a new heuristic method using the VGA method as its component. 
Unlike classical metaheuristics (see review in~\textcite{hoSurveyDialarideProblems2018}), which are iterative methods, the heuristic proposed here is constructive: it creates a single solution and does not improve it afterward. 

The principle of the proposed heuristic method is to split the demand by time into short time intervals (\emph{batches}) that can be solved by the VGA method optimally. 
Then, these batches are joined by the chaining method proposed in this work, resulting in plans that cover the whole instance period.
The simplified scheme of the method is in Figure~\ref{fig:main_diagram}.
\begin{figure}
    \centering
    \includegraphics[width=1\columnwidth]{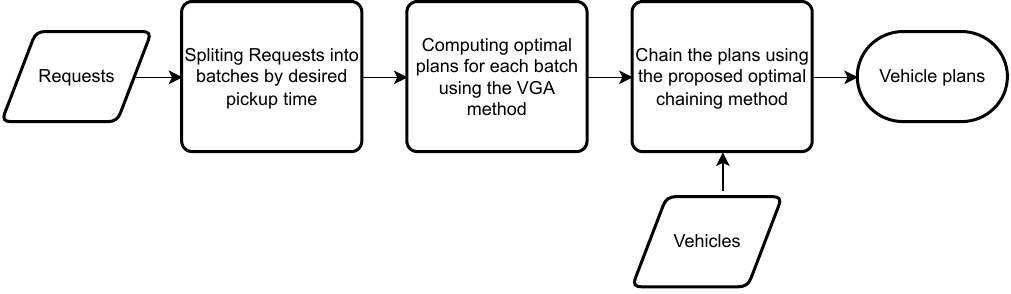}
    \caption{The scheme of the proposed method.}
    \label{fig:main_diagram}
\end{figure}
Note that in this scheme, the VGA method can be replaced by any other optimal DARP solution method.
If you are interested in more details about how the VGA method needs to be modified in the proposed heuristic, check the supplementary material.

\subsection{Instances, Evaluated Methods, and Experiment Configuration}
\label{sec:instances}
For the demonstration, we used large-scale ridesharing DARP instances presented in~\textcite{fiedlerLargescaleRidesharingDARP2023}
These instances are created from the publicly available datasets for three US cities: New York City, Chicago, and Washington, DC.
In addition to these three, these datasets contain the Manhattan area created using the New York City demand.
These four areas vary not only geographically but are also very different in the magnitude of both road network and travel demand (see Table~\ref{tab:areas}).
\begin{table}
    \centering
    \caption{Area statistics, as specified in~\textcite{fiedlerLargescaleRidesharingDARP2023}.}
    \label{tab:areas}
    \settowidth\tymin{Manhattan}
    \setlength\extrarowheight{2pt}
    \begin{tabulary}{\linewidth}{LRRRR}
    Area & Road length [km] & Area [km\textsuperscript{2}] & Requests/h/km\textsuperscript{2} \\
    \hline
    NYC &  27721 & 1508 & 26.72 \\
    Manhattan &  1329 & 87 & 266.87 \\
    Chicago  & 31982 & 1004 & 1.13 \\
    DC & 5877 & 181 & 3.65 \\
    \end{tabulary}
\end{table}
For each area, we selected only instances with a maximum delay of \SI{5}{minutes} and instance duration of 5, 15, and \SI{30}{minutes}.
We consider the request to be served by conventional personal vehicles and, therefore, we set the vehicle capacity to four persons.
The parameters of all used instances are displayed in Table~\ref{tab:instances}
\begin{table}
\caption{Instances}
\centering 
\begin{tabular}{lrrrr}
Instances & Duration & Requests & Vehicles & Trip Length [mean$\pm$std min] \\
DC & 5 min & 54 & 50 & 15.2$\pm$9.8min \\
DC & 15 min & 163 & 121 & 15.8$\pm$8.7min \\
DC & 30 min & 328 & 180 & 15.9$\pm$8.4min \\
Chicago & 5 min & 91 & 65 & 14.2$\pm$16.6min \\
Chicago & 15 min & 274 & 198 & 13.4$\pm$15.1min \\
Chicago & 30 min & 596 & 299 & 14.1$\pm$15.9min \\
Manhattan & 5 min & 1658 & 781 & 7.8$\pm$4.0min \\
Manhattan & 15 min & 5113 & 1672 & 7.7$\pm$3.8min \\
Manhattan & 30 min & 10362 & 1993 & 7.6$\pm$3.8min \\
NYC & 5 min & 3488 & 2384 & 10.1$\pm$6.8min \\
NYC & 15 min & 10567 & 5085 & 9.9$\pm$6.6min \\
NYC & 30 min & 20841 & 5905 & 9.9$\pm$6.5min \\
\end{tabular}
\label{tab:instances}
\end{table}
    

We compared the proposed heuristic method with three other methods for solving DARP, each representing a single category of DARP solution methods:
\begin{itemize}
    \item the vehicle Vehicle-group Assignment (VGA) method~\cite{alonso-moraOndemandHighcapacityRidesharing2017,capMultiObjectiveAnalysisRidesharing2018}: an optimal solution method,
    \item the Insertion Heuristic (IH)~\cite{jawHeuristicAlgorithmMultivehicle1986}, implemented as in~\textcite{fiedlerLargescaleOnlineRidesharing2022}, representing simple construction heuristics, 
    \item and the Adaptive Large Neighborhood Search (ALNS)~\cite{ropkeAdaptiveLargeNeighborhood2006}, configured as in~\textcite{masmoudiHybridAdaptiveLarge2020}\footnote{We omitted the "hybrid" part as implementing the genetic operators for free-floating vehicles would be too complicated, considering that we use this metaheuristic only as one of three baseline methods.}.
\end{itemize}

As the selected set of instances is challenging for the solution methods due to the large scale, we cannot expect that all the solution methods will be able to compute all instances. 
However, we include even the optimal method with exponential complexity to cover all major types of DARP solution methods.
Considering the proposed method, we tested multiple of its configurations. 
First, we tried batch lengths of 30, 6, 120, 240, and \SI{480}{s}.
Second, we also tried a time-limited version where the underlying VGA method does not solve the individual batches optimally but is time-limited instead.

We implemented all methods in C++. 
For solving mixed integer programs, we used the Gurobi~\cite{gurobi}.
We ran the experiments on the AMD EPYC 7543 CPU.
The computation was limited to \SI{24}{hours}.
Because some of the methods also require an extensive amount of memory, we limited the RAM usage to \SI{80}{GB}, not including the memory used to store the distance matrix for travel time computation (which varies dramatically between areas).
As the proposed method can be easily parallelized, we used multiple threads to solve the method.
Note, however, that the parallelization is limited to the trivially parallelizable algorithms (computing batches in parallel) or to the library calls with built-in parallelization (Gurobi solver).
We believe that the evaluated methods can be further parallelized to achieve a smaller computational time.
Finally, we did not spend extensive time with software optimizations of the evaluated methods.
Therefore, the computational time results should be taken with a grain of salt.

\subsection{Results}
\label{sec:results}
We present here the results in four tables: one for each area.
The areas are discussed from the most complex (largest by demand).
In each area table, only the methods that were able to solve at least one instance for the area are included.

Table~\ref{tab:results_nyc} shows the results for New York City. 
Only the variants of the chaining method, together with IH, were able to solve any instance within the time limit. 
Moreover, some of the variants of the proposed method were not able to solve the instance either, and the \SI{30}{minute} instance was solved only by the IH.
However, for the remaining two instances, the proposed method provides the best results.
\begin{table}
\caption{NYC results. 
None of the NYC instances were solved by the optimal method in time.
The same is true for the metaheuristic variants}
\footnotesize
\centering 
\makebox[\textwidth]{
\setlength{\tabcolsep}{2pt}
\begin{tabular}{lllrrrrrrrrr}
\toprule
 &  &  & \multicolumn{3}{c}{Total Cost [min]} & \multicolumn{3}{c}{Used Vehicles} & \multicolumn{3}{c}{Comp. time [s]} \\
Method & Batch [s] & Lim. & \SI{5}{min} & \SI{15}{min} & \SI{30}{min} & \SI{5}{min} & \SI{15}{min} & \SI{30}{min} & \SI{5}{min} & \SI{15}{min} & \SI{30}{min}  \\
\midrule
IH & - & no & 30397 & 81486 & \bfseries 157023 & 1613 & \bfseries 3716 & \bfseries 5031 & \bfseries 5.76 & \bfseries 39.55 & \bfseries 91.87 \\
\cline{1-12} \cline{2-12}
\multirow[c]{6}{*}{\rotatebox{90}{Proposed met.}} & 30 & no & 34607 & 100729 & - & 2061 & 4812 & - & 61.68 & 1004.12 & - \\
\cline{2-12}
 & 60 & no & 31789 & 93646 & - & 1854 & 4572 & - & 43.26 & 709.80 & - \\
\cline{2-12}
 & \multirow[c]{2}{*}{120} & no & 29364 & 85486 & - & 1680 & 4269 & - & 249.57 & 897.20 & - \\
 &  & yes & 29372 & 85487 & - & 1680 & 4266 & - & 135.45 & 719.93 & - \\
\cline{2-12}
 & 240 & yes & 27509 & - & - & 1567 & - & - & 224.84 & - & - \\
\cline{2-12}
 & 480 & yes & \bfseries 25736 & \bfseries 76832 & - & \bfseries 1450 & 4308 & - & 223.54 & 1020.74 & - \\
\cline{1-12} \cline{2-12}
\bottomrule
\end{tabular}
}
\label{tab:results_nyc}
\end{table}

Similarly, in the Manhattan instance, (Table~\ref{tab:results_man}), the proposed method beats the IH in 2 of 3 instances.
Here, the proposed method was able to compute even the instance with the longest time horizon; however, the successful variants provide worse results than IH.
The ALNS variants were able to compute at least the \SI{5}{minute} instance here, but they were outperformed by the proposed method. 
\begin{table}
\caption{Manhattan results. 
Here, almost all variants of all methods provide results for at least one instance. 
The ALNS-IH method is ALNS initialized with the solution of the insertion heuristic.
The ALNS-prop is the ALNS initialized with the proposed method of batch \SI{120}{s}.
}
\footnotesize
\centering 
\makebox[\textwidth]{
\setlength{\tabcolsep}{4pt}
\begin{tabular}{lllrrrrrrrrr}
\toprule
 &  &  & \multicolumn{3}{c}{Total Cost [min]} & \multicolumn{3}{c}{Used Vehicles} & \multicolumn{3}{c}{Comp. time [s]} \\
Method & Batch [s] & Lim. & \SI{5}{min} & \SI{15}{min} & \SI{30}{min} & \SI{5}{min} & \SI{15}{min} & \SI{30}{min} & \SI{5}{min} & \SI{15}{min} & \SI{30}{min}  \\
\midrule
ALNS & - & no & 8533 & - & - & 516 & - & - & 28891.51 & - & - \\
\cline{1-12} \cline{2-12}
ALNS-IH & - & no & 8371 & - & - & 512 & - & - & 25707.88 & - & - \\
\cline{1-12} \cline{2-12}
ALNS-prop & - & no & 7744 & - & - & \bfseries 505 & - & - & 27655.04 & - & - \\
\cline{1-12} \cline{2-12}
IH & - & no & 9626 & 25386 & \bfseries 48420 & 594 & \bfseries 1306 & \bfseries 1721 & \bfseries 0.61 & \bfseries 6.09 & \bfseries 18.40 \\
\cline{1-12} \cline{2-12}
\multirow[c]{6}{*}{\rotatebox{90}{Proposed met.}} & 30 & no & 10426 & - & - & 738 & - & - & 3.96 & - & - \\
\cline{2-12}
 & 60 & no & 9140 & 27686 & 55533 & 635 & 1648 & 1993 & 3.45 & 47.90 & 1706.07 \\
\cline{2-12}
 & \multirow[c]{2}{*}{120} & no & 8312 & 24360 & 48662 & 568 & 1504 & 1963 & 144.69 & 273.49 & 845.55 \\
 &  & yes & 8305 & 24360 & 48662 & 568 & 1504 & 1963 & 107.69 & 274.43 & 835.12 \\
\cline{2-12}
 & 240 & yes & 7764 & \bfseries 21646 & - & 547 & 1383 & - & 205.15 & 625.21 & - \\
\cline{2-12}
 & 480 & yes & \bfseries 7445 & - & - & 562 & - & - & 205.90 & - & - \\
\cline{1-12} \cline{2-12}
\bottomrule
\end{tabular}
}
\label{tab:results_man}
\end{table}
Note, however, that the ANLS-VGA variant is outperformed only by the proposed method with the batch length of \SI{480}{s} which actually does not chain anything, as the batch is longer than the instance time horizon. 
The reason why this method is able to compute the instance, while the VGA is failing is probably because the proposed method computes the VGA part without considering the positions of the vehicles, even for the first batch. 
This leads to suboptimality, but also to the reduction of CPU time and memory requirements.

In the Chicago area, the shortest instance was solved to optimality, showing the limits for practical application of the heuristic methods.
All other methods were able to solve all instances, with the exception of the 30 and \SI{60}{s} variants of the proposed method.
The best solution for the \SI{15}{minutes} and \SI{30}{minute} instances, is provided by ALNS.
For the \SI{15}{minute} instance, however, it is the ALNS variant initialized by the proposed method.
\begin{table}
\caption{Chicago results.
Here, almost all methods compute all the instances.
The ALNS-IH method is ALNS initialized with the solution of the insertion heuristic.
The ALNS-prop is the ALNS initialized with the proposed method of batch \SI{120}{s}.}
\footnotesize
\centering 
\makebox[\textwidth]{
\setlength{\tabcolsep}{4pt}
\begin{tabular}{lllrrrrrrrrr}
\toprule
 &  &  & \multicolumn{3}{c}{Total Cost [min]} & \multicolumn{3}{c}{Used Vehicles} & \multicolumn{3}{c}{Comp. time [s]} \\
Method & Batch [s] & Lim. & \SI{5}{min} & \SI{15}{min} & \SI{30}{min} & \SI{5}{min} & \SI{15}{min} & \SI{30}{min} & \SI{5}{min} & \SI{15}{min} & \SI{30}{min}  \\
\midrule
ALNS-IH & - & no & 1165 & 2998 & \bfseries 6612 & 42 & \bfseries 97 & \bfseries 166 & 57.58 & 699.08 & 10049.92 \\
\cline{1-12} \cline{2-12}
ALNS-prop & - & no & 1153 & \bfseries 2982 & 6690 & \bfseries 41 & 102 & 170 & 56.85 & 723.35 & 10467.21 \\
\cline{1-12} \cline{2-12}
IH & - & no & 1200 & 3117 & 7090 & 46 & 114 & 209 & \bfseries 0.01 & \bfseries 0.04 & \bfseries 0.15 \\
\cline{1-12} \cline{2-12}
VGA & - & no & \bfseries 1129 & - & - & 42 & - & - & 15.14 & - & - \\
\cline{1-12} \cline{2-12}
\multirow[c]{8}{*}{\rotatebox{90}{Proposed method}} & 30 & no & 1451 & 3940 & - & 57 & 165 & - & 0.12 & 0.62 & - \\
\cline{2-12}
 & 60 & no & 1411 & 3825 & - & 55 & 153 & - & 0.11 & 0.40 & - \\
\cline{2-12}
 & \multirow[c]{2}{*}{120} & no & 1311 & 3613 & 8943 & 51 & 142 & 258 & 0.14 & 0.32 & 1.84 \\
 &  & yes & 1311 & 3613 & 8943 & 51 & 142 & 258 & 0.11 & 0.35 & 1.84 \\
\cline{2-12}
 & \multirow[c]{2}{*}{240} & no & 1234 & 3334 & 8246 & 48 & 129 & 240 & 0.12 & 0.42 & 1.38 \\
 &  & yes & 1234 & 3334 & 8246 & 48 & 129 & 240 & 0.13 & 0.46 & 1.35 \\
\cline{2-12}
 & \multirow[c]{2}{*}{480} & no & 1132 & 3107 & 7391 & \bfseries 41 & 112 & 212 & 0.36 & 10.05 & 32.18 \\
 &  & yes & 1132 & 3107 & 7391 & \bfseries 41 & 112 & 212 & 0.37 & 10.15 & 32.21 \\
\cline{1-12} \cline{2-12}
\bottomrule
\end{tabular}
}
\label{tab:results_chic}
\end{table}

In the DC instance, we were able to solve all instances optimally.
Therefore, comparing the heuristic methods is irrelevant.
However, we can observe that for the \SI{5}{minute} instance, the cost of the heuristic solutions of some methods is very close to the optimal solution.
This signalizes that when the solution space is smaller, the heuristic methods perform well compared to the more complex instances.
\begin{table}
\caption{DC results.
The ALNS-IH method is ALNS initialized with the solution of the insertion heuristic.
The ALNS-prop is the ALNS initialized with the proposed method of batch \SI{120}{s}.
}
\footnotesize
\centering 
\makebox[\textwidth]{
\setlength{\tabcolsep}{4pt}
\begin{tabular}{lllrrrrrrrrr}
\toprule
 &  &  & \multicolumn{3}{c}{Total Cost [min]} & \multicolumn{3}{c}{Used Vehicles} & \multicolumn{3}{c}{Comp. time [s]} \\
Method & Batch [s] & Lim. & \SI{5}{min} & \SI{15}{min} & \SI{30}{min} & \SI{5}{min} & \SI{15}{min} & \SI{30}{min} & \SI{5}{min} & \SI{15}{min} & \SI{30}{min}  \\
\midrule
ALNS & - & no & 1062 & 2843 & 5252 & 39 & 85 & 127 & 19.21 & 149.25 & 979.12 \\
\cline{1-12} \cline{2-12}
ALNS-IH & - & no & 1023 & 2789 & 5157 & \bfseries 38 & 88 & 132 & 18.31 & 137.85 & 843.49 \\
\cline{1-12} \cline{2-12}
ALNS-prop & - & no & 1029 & 2789 & 5159 & \bfseries 38 & 87 & 129 & 18.25 & 150.52 & 1002.02 \\
\cline{1-12} \cline{2-12}
IH & - & no & 1049 & 2934 & 5593 & 39 & 93 & 146 & \bfseries 0.00 & \bfseries 0.01 & \bfseries 0.04 \\
\cline{1-12} \cline{2-12}
VGA & - & no & \bfseries 1009 & \bfseries 2677 & \bfseries 4821 & \bfseries 38 & \bfseries 82 & \bfseries 122 & 0.09 & 2.31 & 457.56 \\
\cline{1-12} \cline{2-12}
\multirow[c]{8}{*}{\rotatebox{90}{Proposed method}} & 30 & no & 1136 & 3545 & 6859 & 46 & 121 & 180 & 0.11 & 0.27 & 1.51 \\
\cline{2-12}
 & 60 & no & 1122 & 3496 & 6744 & 45 & 120 & 180 & 0.10 & 0.36 & 1.29 \\
\cline{2-12}
 & \multirow[c]{2}{*}{120} & no & 1095 & 3373 & 6400 & 44 & 115 & 170 & 0.08 & 0.22 & 1.05 \\
 &  & yes & 1095 & 3373 & 6400 & 44 & 115 & 170 & 0.06 & 0.20 & 1.05 \\
\cline{2-12}
 & \multirow[c]{2}{*}{240} & no & 1023 & 3263 & 6095 & 39 & 113 & 169 & 0.06 & 0.22 & 0.61 \\
 &  & yes & 1023 & 3263 & 6095 & 39 & 113 & 169 & 0.09 & 0.23 & 0.72 \\
\cline{2-12}
 & \multirow[c]{2}{*}{480} & no & 1015 & 2983 & 5540 & \bfseries 38 & 104 & 154 & 0.09 & 0.18 & 0.44 \\
 &  & yes & 1015 & 2983 & 5540 & \bfseries 38 & 104 & 154 & 0.08 & 0.18 & 0.43 \\
\cline{1-12} \cline{2-12}
\bottomrule
\end{tabular}
}
\label{tab:results_dc}
\end{table}

If we look at the computational time, it is clear absolute winner is the insertion heuristic.
We can see that with an increased instance time horizon, the computational time grows more than linearly.
Compared to it, the optimal method is slower, and the computational time grows even faster, exponentially with the instance time horizon.
When we compare VGA and the proposed method, the proposed method is mostly faster, and the computational time does not grow that fast.
However, we can see differences between method variants: longer batches result in a linear computation time growth, while shorter batches slow down faster. 
This is a result of the chaining, which is not parallelized, and is much more difficult when the batches are short.
Finally, the metaheuristic is the slowest one, by far. 
It could be probably sped up by reducing the number of iterations but with the cost of reducing the quality of the solutions.

The last measured quality in the main result table is the number of used vehicles. 
This represents the capital cost of the solution, and also somehow indicates the vehicle occupancy, which is plotted separately in Figure~\ref{fig:occupancy} in the form of a histogram.
\begin{figure}
    \centering
    \includegraphics[width=1\textwidth]{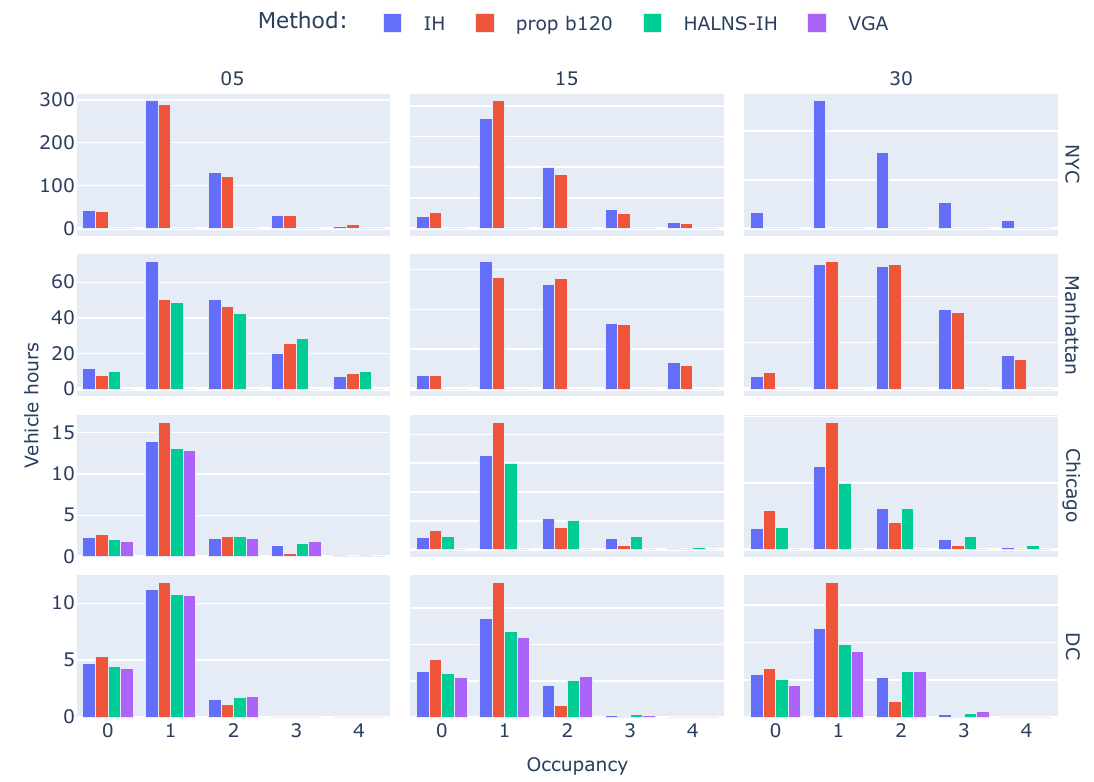}
    
    \caption{
    Occupancy histograms for all city-exp. length combinations.
    The \texttt{prop b120} method is the proposed method with a batch length of 120s.
    }
    \label{fig:occupancy}
\end{figure}
To make the histogram clear, we included only one variant of the proposed heuristic and one ALNS variant.
In general, we can see that the demand in the Chicago and DC areas is not dense enough to make significant savings by ridesharing. 
The vehicle hours with occupancy of two requests are compensated by empty vehicle hours and only a small fraction of vehicle hours are driven with more than two requests on board.
However, in the NYC area, the effect of ridesharing is already significant, and in Manhattan, we can see a high proportion of high occupancy, even fully occupied vehicles.
When comparing the occupancy between individual methods, we can see a correlation with the solution cost: the methods with lower solution costs tend to have higher occupancy.

We also examined delays for each individual travel request.
The histogram of the delays is in Figure~\ref{fig:delay}.
In general, the delays are higher for the methods that offer lower operational costs, as the increased share of shared trips causes more delays.
However, this relationship is not strict.
In the many histograms, we can observe that the IH and ALNS cause the highest delays, but the most efficient is the proposed method, or VGA (compare, e.g., to Table~\ref{tab:results_man}).
\begin{figure}
    \centering
    \includegraphics[width=1\textwidth]{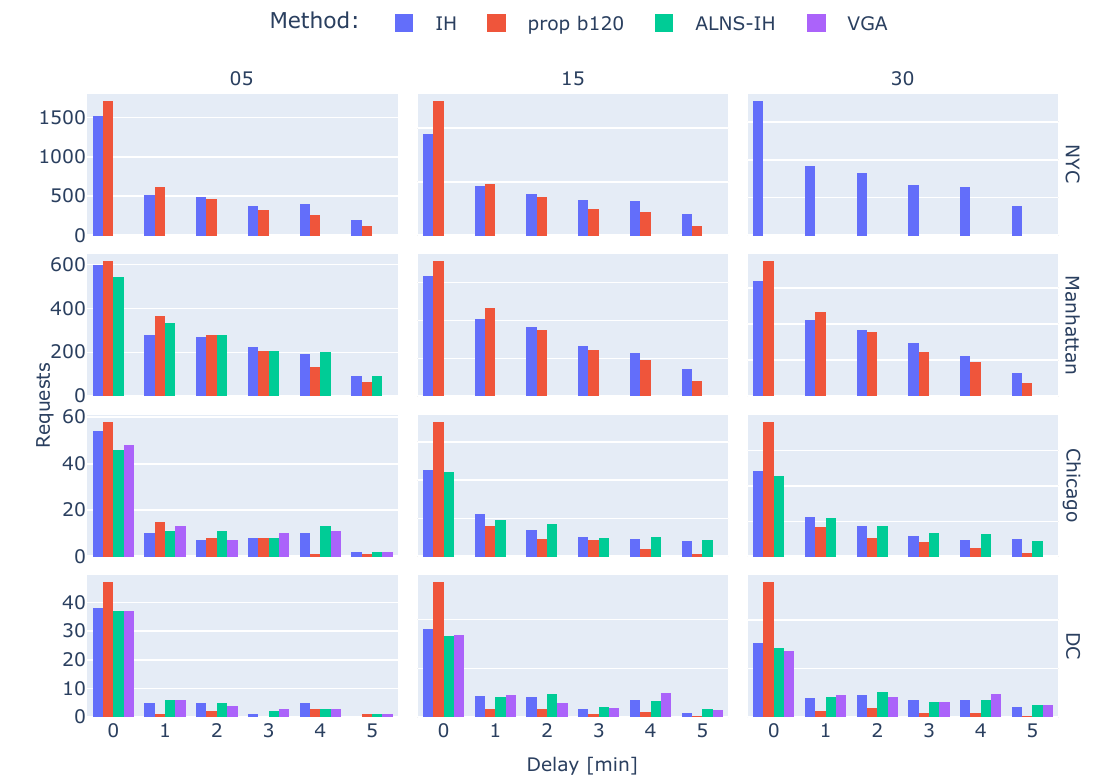}
    
    \caption{
    Delay histograms for all city-exp. length combinations.
    The \texttt{prop b120} method is the proposed method with a batch length of 120s.
    }
    \label{fig:delay}
\end{figure}

Overall, it is evident that we cannot choose a method that is best for all the instances.
Rather than that, one message of these results, also previously indicated in~\textcite{fiedlerLargescaleRidesharingDARP2023}, is that it is essential to test the DARP methods on multiple instances with different characteristics, before making any assumptions on the best method.
For small instances with low computational complexity, we can use the optimal method, even if the time horizon is long.
For the hardest instances, only the IH is applicable. 
Finally, for the remaining instances with medium complexity, the proposed method is usually the best.
Note also that the ALNS\nobreakdash-prop method, which sometimes outperforms all configurations of the proposed method, is initialized with the solution of the proposed method. 
Therefore, we think that this demonstration of the possible application of the optimal chaining proves its practical usability, and should be in the toolbox of future researchers focused on DARP.

\section{Conclusion}
For many problems related to on-demand mobility (MoD), it is desirable to connect vehicle plans into plan sequences, which can be ultimately seen as a new, longer, plan.
Some examples of such problems are fleet sizing or dial-a-ride problem (DARP).
There already exist methods for chaining (connecting) plans. 
However, they do not consider an important aspect of MoD: the time windows. 
Typically, each travel request has some time flexibility, called a time window.
These time windows propagate to the vehicle plans.
In conclusion, for optimal plan chaining, it is necessary to consider those time windows.
In this work, for the first time, we formulate the plan chaining problem with time windows.
Next, we propose a solution method for solving the chaining problem, significantly reducing the search space compared to naive solutions.
Moreover, we prove that the method is optimal.
Finally, we present a demonstration of the proposed chaining method that exhibits its potential to help solve large-scale DARPs. 
To solve the DARP, we split the demand by start time into batches, then solve each batch optimally, and finally, chain the plans using the proposed chaining method.
The demonstration results confirm the applicability of the proposed chaining method, as the heuristic based on it can solve instances that cannot be solved optimally in the time limit while being better than other evaluated heuristics in most of the evaluated instances.
In the future, we would like to evaluate the optimal chaining in other contexts: first, for instances with much longer duration (e.g., \SI{24}{h}) and second, for fleet-sizing.

\section*{Acknowledgments}
This work was supported by the Technology Agency of the Czech Republic within the DOPRAVA 2020+ program, project no. \texttt{CK04000150}.
Fabio V.~Difonzo gratefully thanks the INdAM-GNCS group for partial support. 
The access to the computational infrastructure of the OP VVV funded project \texttt{CZ.02.1.01/0.0/0.0/16\_019/0000765} "Research Center for Informatics" is also gratefully acknowledged.

\section*{Conflicts of Interest}
The authors declare no conflict of interest.

\printbibliography

\end{document}